\newtheorem{thm}{Theorem}
\newtheorem{Definition}{Definition}
\newtheorem{Remark}{Remark}
\newtheorem{ex}{Example}
\begin{document}

\title{Fractional Variational Principle of Herglotz}

\author{Ricardo Almeida$^1$\\
\texttt{ricardo.almeida@ua.pt}
\and Agnieszka B. Malinowska$^3$\\
\texttt{a.malinowska@pb.edu.pl}}

\date{$^1$Department of Mathematics, University of Aveiro, 3810-193 Aveiro, Portugal\\[0.3cm]
$^3$Faculty of Computer Science, Bia{\l}ystok University of Technology,\\
15-351 Bia\l ystok, Poland}

\maketitle

%----------------------------------------------------------------
\begin{abstract}

The aim of this paper is to bring together two approaches to non-conservative systems -- the generalized variational principle of Herglotz and the fractional calculus of variations. Namely, we consider functionals whose extrema are sought, by differential equations that involve Caputo fractional derivatives. The Euler--Lagrange equations are obtained for the fractional variational problems of Herglotz-type and the transversality conditions are derived. The fractional Noether-type theorem for conservative and non-conservative physical systems is proved.

\noindent \textbf{Keywords}: Calculus of variations, fractional calculus, Caputo fractional derivative, Euler--Lagrange equation, Noether-type theorem.

\smallskip

\noindent \textbf{Mathematics Subject Classification}: 49K05; 26A33.
\end{abstract}

%----------------------------------------------------------------

\section{Introduction}

As it was pointed out by Cornelius L\'anczos \cite{Lanczos}, frictional and non-conservative forces
are beyond the usual macroscopic variational treatment and, consequently,
beyond the most advanced methods of classical mechanics. Over the years, a number of methods have been presented to circumvent the discrimination against non-conservative systems. We can mention the Rayleigh dissipation function \cite{gol}--which is the best known, the Bateman--
Caldirola--Kanai (BCK) Lagrangian \cite{men}, the generalized variational principle of Herglotz \cite{gue,Herglotz} or the fractional calculus of variations \cite{book:MT,rew}. The aim of this paper is to marge the generalized variational principle of Herglotz with the fractional calculus of variations. In other words, we consider functionals whose extrema are sought, by differential equations that involve Caputo fractional derivatives.

The generalized variational principle was proposed by Gustav Herglotz in 1930 (see \cite{Herglotz}). It generalizes the classical
variational principle by defining the functional  through a differential equation. It reduces to the classical variational integral under
appropriate conditions. The generalized variational principle gives a
variational description of non-conservative processes. For a system, conservative or non-conservative, which can be described
with the generalized variational principle, one can systematically derive conserved
quantities, as shown in \cite{geo2,geo3,geo4}, by applying the Noether-type theorem.

The fractional calculus of variations generalizes the classical variational calculus by considering fractional derivatives into the variational
integrals to be extremized. Fred Riewe \cite{rew} showed that a Lagrangian involving fractional
time derivatives leads to an equation of motion with non-conservative forces such
as friction.  After the seminal papers of Riewe, several different approaches have been developed to generalize the least action principle and the
Euler--Lagrange equations to include fractional derivatives. It has been proved, using the notion of Euler--Lagrange fractional extremal, a Noether-type theorem that combines conservative and non-conservative cases of dynamical systems (see \cite{Gastão+Torres 2007} or \cite{book:MT} and references given there).

The significance of both approaches to non-conservative systems -- the generalized variational principle of Herglotz and the fractional calculus of variations -- motivated this work. The paper is organized as follows. In Section 2, for the reader's convenience, we review the necessary notions of the fractional calculus. Our results are given in next sections: in Section 3 we derive
the Euler-–Lagrange equations and the transversality conditions for fractional variational problems of Herglotz-type with one independent variable (Theorems~\ref{TNC} and \ref{TNC2}), with higher-order fractional derivatives (Theorems \ref{TNC5} and \ref{TNC6}), and for problems with several independent variables (Theorem~\ref{TNC7}). An heuristic method for solving the Euler--Lagrange equations for the fractional Herglotz-type problem is proposed. Finally, in Section 4 we obtain the fractional Noether-type theorem (Theorem~\ref{Noether:without:time} and Theorem~\ref{Noether:without:time:m}) that can be used for conservative and non-conservative physical systems. Throughout the paper we illustrate the new results by specific examples.
%----------------------------------------------------------------

\section{Preliminaries}
\label{sec2}

For the convenience of the reader, we present the definitions of fractional operators that will be used in the sequel. For
more on the theory of fractional calculus we refer to \cite{kilbas,Podlubny,Samko}; and for general results on the
fractional calculus of variations to \cite{book:MT} and references therein. \\

Let $x:[a,b]\rightarrow\mathbb{R}$ be a function, $\alpha$ be a positive real number and $n=[\alpha]+1$, where $[\alpha]$ denotes the integer part of $\alpha$. In what follows, we assume that $x$ satisfies appropriate conditions in order to the fractional operators to be well defined. The left and right Riemann--Liouville fractional integrals of order $\alpha$ are given by
$${_aI_t^\alpha}x(t)=\frac{1}{\Gamma(\alpha)}\int_a^t (t-\tau)^{\alpha-1}x(\tau)d\tau,\quad t>a$$
and
$${_tI_b^\alpha}x(x)=\frac{1}{\Gamma(\alpha)}\int_t^b(\tau-t)^{\alpha-1} x(\tau)d\tau,\quad t<b,$$
respectively, where $\Gamma$ represents the Gamma function:
$$\Gamma(z)=\int_0^\infty t^{z-1}e^{-t}\, dt.$$ The left and right Riemann--Liouville fractional derivatives are given by
$${_aD_t^\alpha}x(t) =\frac{1}{\Gamma(n-\alpha)}\frac{d^n}{dt^n}\int_a^t (t-\tau)^{n-\alpha-1}x(\tau) d\tau,\quad t>a$$
and
$${_tD_b^\alpha}x(t)=\frac{(-1)^n}{\Gamma(n-\alpha)}\frac{d^n}{dt^n}\int_t^b (\tau-t)^{n-\alpha-1}x(\tau) d\tau, \quad t<b,$$
respectively. The left and right Caputo fractional derivatives are defined by
$${^C_aD_t^\alpha}x(t) =\frac{1}{\Gamma(n-\alpha)}\int_a^t (t-\tau)^{n-\alpha-1}x^{(n)}(\tau) d\tau,\quad t>a$$
and
$${^C_tD_b^\alpha}x(t)=\frac{(-1)^n}{\Gamma(n-\alpha)}\int_t^b (\tau-t)^{n-\alpha-1}x^{(n)}(\tau) d\tau, \quad t<b,$$
respectively. The integration by parts formula plays a crucial role in deriving the Euler--Lagrange equation. For the left Caputo fractional derivative, this formula is formulated the following way.

\begin{thm}\label{IP} (\cite{book:Klimek}))
Let $\alpha>0$, and $x,y:[a,b]\to\mathbb{R}$ be two functions of class $C^n$, with $n=[\alpha]+1$. Then,
$$\int_{a}^{b}y(t)\cdot {_a^C D_t^\alpha}x(t)dt=\int_a^b x(t)\cdot {_t D_b^\alpha} y(t)dt+\sum_{j=0}^{n-1}\left[{_tD_b^{\alpha+j-n}}y(t) \cdot (-1)^{n-1-j}x^{(n-1-j)}(t)\right]_a^b.$$
\end{thm}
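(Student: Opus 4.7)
My plan is to reduce the statement to the standard ``Fubini'' integration by parts for the Riemann--Liouville fractional integrals, followed by iterated classical integration by parts. The key structural observation is that, by the very definition of the left Caputo derivative,
$${^C_aD_t^\alpha}x(t) = {_aI_t^{n-\alpha}}\, x^{(n)}(t),$$
so the left-hand side of the claim equals $\int_a^b y(t)\cdot {_aI_t^{n-\alpha}} x^{(n)}(t)\,dt$.

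First I would swap the fractional integral onto $y$. A direct Fubini argument on the double integral over $\{a\le\tau\le t\le b\}$ yields
$$\int_a^b y(t)\cdot {_aI_t^\beta} f(t)\,dt = \int_a^b f(t)\cdot {_tI_b^\beta} y(t)\,dt \qquad (\beta>0).$$
Applied with $\beta=n-\alpha$ and $f=x^{(n)}$, and setting $g(t) := {_tI_b^{n-\alpha}} y(t)$, the problem becomes the evaluation of $\int_a^b x^{(n)}(t)\, g(t)\,dt$.

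Next I would iterate classical integration by parts $n$ times to peel the $n$ derivatives off $x$, producing
$$\int_a^b x^{(n)}(t)\, g(t)\,dt \;=\; \sum_{k=0}^{n-1}(-1)^k \left[g^{(k)}(t)\, x^{(n-1-k)}(t)\right]_a^b \;+\;(-1)^n\!\int_a^b x(t)\, g^{(n)}(t)\,dt,$$
and then translate each $g^{(k)}$ back into a right Riemann--Liouville derivative of $y$. Unrolling the definition of ${_tD_b^\beta}$ (extended to negative orders by ${_tD_b^{-\gamma}} = {_tI_b^\gamma}$ for $\gamma>0$) gives the clean identity
$$g^{(k)}(t)\;=\;\frac{d^k}{dt^k}{_tI_b^{n-\alpha}} y(t)\;=\;(-1)^k\,{_tD_b^{\alpha+k-n}} y(t), \qquad 0\le k\le n,$$
because for $k\ge 1$ the integer part of $\alpha+k-n$ is $k-1$, so the defining formula uses $k$ derivatives and contributes exactly the factor $(-1)^k$; for $k=0$ the identity is tautological. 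Taking $k=n$ recovers the main term $\int x\cdot {_tD_b^\alpha} y\,dt$ on the right-hand side (the two factors of $(-1)^n$ cancel), while $k=0,\ldots,n-1$ supply the boundary summands.

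The conceptual work is essentially done at this point; the one real obstacle is careful sign bookkeeping. One must combine the $(-1)^k$ coming from the $k$-th round of classical IBP with the $(-1)^k$ appearing in the identification $g^{(k)}=(-1)^k\,{_tD_b^{\alpha+k-n}}y$, verify the regularity hypotheses ($x\in C^n$ ensures $x^{(n)}\in C^0$ and the Fubini swap is justified, while $y\in C^n$ guarantees that ${_tI_b^{n-\alpha}}y$ is $n$-times differentiable in the required sense), and finally re-index $j=k$ so that the resulting parity matches the $(-1)^{n-1-j}$ weight in the stated boundary sum. The rest is routine.
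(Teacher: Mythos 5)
The paper does not actually prove Theorem~\ref{IP}: it is quoted from \cite{book:Klimek} and used as a black box, so the only question is whether your argument lands on the displayed identity. Your strategy is the standard one and is sound up to the final step: the reduction ${^C_aD_t^\alpha}x={_aI_t^{n-\alpha}}x^{(n)}$, the Fubini swap $\int_a^b y\,{_aI_t^\beta}f\,dt=\int_a^b f\,{_tI_b^\beta}y\,dt$, the $n$-fold classical integration by parts, and the identification $g^{(k)}=(-1)^k\,{_tD_b^{\alpha+k-n}}y$ for $g={_tI_b^{n-\alpha}}y$ are all correct, and the hypotheses $x,y\in C^n$ do support these manipulations.

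The gap is exactly the sign bookkeeping you defer as ``routine.'' Combining your own two identities, the $k$-th boundary term is $(-1)^k\bigl[g^{(k)}x^{(n-1-k)}\bigr]_a^b=(-1)^k(-1)^k\bigl[{_tD_b^{\alpha+k-n}}y\cdot x^{(n-1-k)}\bigr]_a^b$, so the two factors of $(-1)^k$ cancel and your computation yields $\sum_{j=0}^{n-1}\bigl[{_tD_b^{\alpha+j-n}}y\cdot x^{(n-1-j)}\bigr]_a^b$ with weight $+1$, not the weight $(-1)^{n-1-j}$ in the statement. The two coincide only for $n=1$ (the case used in Theorems \ref{TNC}, \ref{TNC2} and \ref{TNC7}); for $n\ge2$ they differ, and it is your version that is correct under the natural convention ${_tD_b^{-\gamma}}={_tI_b^{\gamma}}$. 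For instance, with $\alpha=3/2$, $[a,b]=[0,1]$, $x(t)=t$, $y(t)=(1-t)^2$, the left-hand side is $0$ (since $x''\equiv0$), your version of the right-hand side is $\tfrac{16}{15\sqrt{\pi}}-\tfrac{16}{15\sqrt{\pi}}=0$, while the right-hand side as printed equals $\tfrac{32}{15\sqrt{\pi}}\neq0$; likewise, as $\alpha\to n^-$ your version reduces to the classical $n$-fold integration-by-parts formula and the printed one does not. So the assertion that ``the resulting parity matches the $(-1)^{n-1-j}$ weight'' is not a routine check that you omitted -- it is false as stated, and carrying out the bookkeeping would have revealed that what your argument proves is a corrected form of the identity rather than the one displayed. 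You should either exhibit a convention for ${_tD_b^{\alpha+j-n}}$ that produces the extra $(-1)^{n-1-j}$ (there is none compatible with ${_tD_b^{-\gamma}}={_tI_b^{\gamma}}$) or state explicitly that the boundary sum must read $\sum_{j=0}^{n-1}\bigl[{_tD_b^{\alpha+j-n}}y\cdot x^{(n-1-j)}\bigr]_a^b$.
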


Partial fractional integrals and derivatives for functions of $p$ independent variables are defined in a similar way as is done for integer order derivatives. Let $x:\prod_{i=1}^p [a_i,b_i]\to\mathbb R$ be a function, $\alpha_i$, $i=1,\ldots,p$ be real numbers and define $n_i=[\alpha_i]+1$. Then, we define the partial Riemann--Liouville fractional integrals of order $\alpha_i$ with respect to $t_i$ by
\begin{equation*}
{_{a_{i}}I_{t_{i}}^{{\alpha}_i}}x(t_1,\ldots,t_p)=\frac{1}{\Gamma(\alpha_i)}\int_{a_i}^{t_i}(t_i-\tau_i)^{\alpha_i-1}x(t_1,\ldots,t_{i-1},\tau_i,t_{i+1},
\ldots,t_p)d\tau_i,
\end{equation*}
\begin{equation*}
{_{t_{i}}I_{b_{i}}^{{\alpha}_i}}x(t_1,\ldots,t_p)=\frac{1}{\Gamma(\alpha_i)}\int_{t_i}^{b_i}(\tau_i-t_i)^{\alpha_i-1}x(t_1,\ldots,t_{i-1},\tau_i,t_{i+1},
\ldots,t_p)d\tau_i.
\end{equation*}

Partial Riemann--Liouville and Caputo derivatives are defined by
\begin{equation*}
{_{a_{i}}D_{t_{i}}^{{\alpha}_i}}x(t_1,\ldots,t_p)=\frac{1}{\Gamma(n_i-\alpha_i)}\frac{\partial^{n_i}}{\partial t_i^{n_i}}\int_{a_i}^{t_i}(t_i-\tau_i)^{n_i-\alpha_i-1}x(t_1,\ldots,t_{i-1},\tau_i,t_{i+1},\ldots,t_p)d\tau_i,
\end{equation*}
\begin{equation*}
{_{t_{i}}D_{b_{i}}^{{\alpha}_i}}x(t_1,\ldots,t_p)=\frac{(-1)^{n_i}}{\Gamma(n_i-\alpha_i)}\frac{\partial^{n_i}}{\partial t_i^{n_i}}\int_{t_i}^{b_i}(\tau_i-t_i)^{n_i-\alpha_i-1}x(t_1,\ldots,t_{i-1},\tau_i,t_{i+1},\ldots,t_p)d\tau_i,
\end{equation*}
\begin{equation*}
{^C_{a_{i}}D_{t_{i}}^{\alpha_i}}x(t_1,\ldots,t_p)=\frac{1}{\Gamma(n_i-\alpha_i)}\int_{a_i}^{t_i}(t_i-\tau_i)^{n_i-\alpha_i-1}
\frac{\partial^{n_i}}{\partial t_i^{n_i}}x(t_1,\ldots,t_{i-1},\tau_i,t_{i+1},\ldots,t_p)d\tau_i,
\end{equation*}
\begin{equation*}
{^C_{t_{i}}D_{b_{i}}^{\alpha_i}}x(t_1,\ldots,t_p)=\frac{(-1)^{n_i}}{\Gamma(n_i-\alpha_i)}\int_{t_i}^{b_i}(\tau_i-t_i)^{n_i-\alpha_i-1}
\frac{\partial^{n_i}}{\partial t_i^{n_i}}x(t_1,\ldots,t_{i-1},\tau_i,t_{i+1},\ldots,t_p)d\tau_i.
\end{equation*}

%%%%%%%%%%%%%%%%%%%%%%%%%%%%%%%%%%%%%%%%%%%%%%%%%%%%%%%%%%%5

\section{The generalized fractional Euler--Lagrange equations}
\label{sec3}

Consider the differential equation with dependence on Caputo fractional derivatives
\begin{equation}\label{MainProblem}
\frac{dz}{dt}=L(t,x(t),{^C_aD^\alpha_t}x(t),z(t)), \quad  t\in[a,b],
\end{equation}
with the initial condition $z(a)=z_a$, where $t$ is the independent variable, $x=(x_1,\ldots,x_n)$ the dependent variable and $${^C_aD^\alpha_t}x(t):=({^C_aD^{\alpha_1}_t}x_1(t),\ldots,{^C_aD^{\alpha_n}_t}x_n(t)).$$
In what follows we assume that:
\begin{enumerate}
\item $x(a)=x_a$, $x(b)=x_b$, $x_a,x_b\in\mathbb R^n$;
\item  $\alpha_j\in(0,1)$, $j=1,\ldots,n$;
\item  $x\in C^1([a,b],\mathbb R^n)$,  ${^C_aD^\alpha_t}x\in C^1([a,b],\mathbb R^n)$;
\item  the Lagrangian $L:[a,b]\times\mathbb R^{2n+1}\to\mathbb R$ is of class $C^1$ and the maps $t\mapsto \displaystyle\lambda(t)\frac{\partial L}{\partial {^C_aD^{\alpha_j}_t}x_j}[x,z](t)$ are such that $\displaystyle{_tD^{\alpha_j}_b\left(\lambda(t)\frac{\partial L}{\partial {^C_aD^{\alpha_j}_t}x_j}[x,z](t)\right)}$, $j=1,\ldots,n$, exist and are continuous on $[a,b]$, where
    $$[x,z](t):=(t,x(t),{^C_aD^\alpha_t}x(t),z(t)) \quad \mbox{and} \quad \lambda(t):=\exp\left(-\int_a^t \frac{\partial L}{\partial z}[x,z](\tau)d\tau \right).$$
\end{enumerate}
For any arbitrary but fixed function $x(t)$ and a fixed initial value $z(a)=z_a$
the solution of the differential equation \eqref{MainProblem}: $z[x;t]=z(t,x(t),{^C_aD^\alpha_t}x(t))$ exists, and depends on $t$, $x(t)$ and ${^C_aD^\alpha_t}x(t)$ (see, e.g., \cite{Anosov}). Moreover, under our assumptions, is a $C^2$ function of its arguments.
In order for the equation \eqref{MainProblem} to define a functional,
$z$, of $x(t)$ we must solve equation \eqref{MainProblem} with the same fixed initial condition
$z(a)=z_a$ for all argument functions $x(t)$, and evaluate the solution
$z=z[x;t]$ at the same fixed final time $t=b$ for all argument
functions $x(t)$.

The fractional variational principle of Herglotz (FVPH) is as follows:\\
{\it Let the functional $z=z[x;b]$ of $x(t)$ be given by a differential equation of the form \eqref{MainProblem} and let the function $\eta\in C^1([a,b],\mathbb R^n)$ satisfies the boundary conditions $\eta(a)=\eta(b)=0$ and such that ${^C_aD^\alpha_t}\eta\in C^1([a,b],\mathbb R^n)$. Then the value of the functional $z[x;b]$ is an extremum for function $x(t)$ which satisfy the condition
\begin{equation}\label{principle}
\frac{d}{d\epsilon} \left.z[x+\epsilon\eta;b]\right|_{\epsilon=0}=\frac{d}{d\epsilon} \left.z\left(b,x(b)+\epsilon\eta(b),{^C_aD^\alpha_b}x(b)+\epsilon{^C_aD^\alpha_b}\eta(b)\right)\right|_{\epsilon=0}=0.
\end{equation}}

Observe that, if we introduce a parameter $\epsilon$ in the equation \eqref{MainProblem}, then the solution $z$ still exists and depends also on $\epsilon$, and it is differentiable with respect to $\epsilon$ (see, e.g., Section 2.6 in \cite{Anosov}). Moreover, under our assumptions, $z$ is a $C^2$ function of its arguments.

\begin{Remark} Later we will consider the case where $x(b)$ is free and deduce the respective natural boundary conditions. In this case, the functions $\eta$ that we consider in the formulation of the (FVPH) are such that $\eta(a)=0$ but $\eta(b)$ may take any value.
\end{Remark}

\begin{Remark}
In the case when $\alpha\to1$, FVPH gives the classical variational principle of Herglotz (see \cite{Herglotz}).
\end{Remark}

\begin{Remark}
In the fractional calculus of variations, $L$ does not depend on $z$ and we can integrated from $a$ to $b$, to obtain the functional
$$z[x]=\int_a^b \left[L(t,x(t),{^C_aD^\alpha_t}x(t))+\frac{z_a}{b-a}\right]dt.$$
\end{Remark}

\begin{thm}\label{TNC} Let $x=(x_1,\ldots,x_n)$ be such that $z[x;b]$ defined by equation \eqref{MainProblem} attains an extremum. Then $x$ is a solution of
\begin{equation}\label{NC}\lambda(t)\frac{\partial L}{\partial x_j}[x,z](t)+{_tD^{\alpha_j}_b\left(\lambda(t)\frac{\partial L}{\partial {^C_aD^{\alpha_j}_t}x_j}[x,z](t)\right)}=0,\quad j=1,\ldots n\end{equation}
on $[a,b]$.
\end{thm}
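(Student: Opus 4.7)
The plan is to follow Herglotz's classical argument, adapted to the fractional setting. First, I introduce a one-parameter family of variations $x(t) + \epsilon \eta(t)$ with $\eta \in C^1([a,b],\mathbb R^n)$ satisfying $\eta(a)=\eta(b)=0$ and $^C_aD^\alpha_t\eta \in C^1$, and let $z(t,\epsilon)$ denote the solution of the initial-value problem \eqref{MainProblem} with $x$ replaced by $x+\epsilon\eta$, still subject to $z(a,\epsilon)=z_a$. By the standing smoothness assumptions $z$ is $C^2$ jointly in $(t,\epsilon)$, so I may define $\zeta(t):=\left.\partial z/\partial\epsilon\right|_{\epsilon=0}$. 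The initial condition gives $\zeta(a)=0$, and the extremum condition \eqref{principle} reads $\zeta(b)=0$.

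Next, I differentiate the defining ODE \eqref{MainProblem} in $\epsilon$ at $\epsilon=0$, using that $\partial/\partial\epsilon$ commutes with both $d/dt$ and the left Caputo operator. This produces the linear first-order equation
$$\zeta'(t) - \frac{\partial L}{\partial z}[x,z](t)\,\zeta(t) = \sum_{j=1}^{n}\left(\frac{\partial L}{\partial x_j}[x,z](t)\,\eta_j(t) + \frac{\partial L}{\partial\,{^C_aD^{\alpha_j}_t}x_j}[x,z](t)\,{^C_aD^{\alpha_j}_t}\eta_j(t)\right),$$
whose integrating factor is precisely $\lambda(t)$. Multiplying through by $\lambda(t)$, recognizing the left-hand side as $(\lambda\zeta)'$, and integrating from $a$ to $b$, the boundary values $\zeta(a)=\zeta(b)=0$ make the left-hand side vanish, yielding the integral identity
$$\int_a^b \sum_{j=1}^{n} \lambda(t)\left(\frac{\partial L}{\partial x_j}[x,z](t)\,\eta_j(t) + \frac{\partial L}{\partial\,{^C_aD^{\alpha_j}_t}x_j}[x,z](t)\,{^C_aD^{\alpha_j}_t}\eta_j(t)\right)dt = 0.$$

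The third step moves the Caputo derivative off $\eta_j$ via Theorem~\ref{IP}. Because $\alpha_j\in(0,1)$ gives $n=1$, the only boundary contribution in the integration-by-parts formula is $\bigl[{_tD^{\alpha_j-1}_b}\!\bigl(\lambda\,\partial L/\partial\,{^C_aD^{\alpha_j}_t}x_j\bigr)\cdot\eta_j\bigr]_a^b$, which vanishes by $\eta_j(a)=\eta_j(b)=0$. The identity then becomes
$$\int_a^b \sum_{j=1}^{n} \eta_j(t)\left[\lambda(t)\frac{\partial L}{\partial x_j}[x,z](t) + {_tD^{\alpha_j}_b}\!\left(\lambda(t)\frac{\partial L}{\partial\,{^C_aD^{\alpha_j}_t}x_j}[x,z](t)\right)\right]dt = 0,$$
valid for every admissible $\eta$. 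A component-wise application of the fundamental lemma of the calculus of variations (choosing $\eta$ supported in one coordinate at a time) yields \eqref{NC} pointwise on $[a,b]$.

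The main obstacle is the $\epsilon$-differentiation step: one has to justify that $\epsilon\mapsto z(\cdot,\epsilon)$ is genuinely differentiable and that $\zeta$ solves the linearized equation in which $\partial/\partial\epsilon$ commutes with the left Caputo operator. This is precisely where the assumptions that $x,\,{^C_aD^\alpha_t}x\in C^1$ and that $L$ is $C^1$ are invoked, together with the standard smooth-dependence-on-parameters theorem for ODEs cited earlier (see \cite{Anosov}). Once this is granted, the rest is formal: the integrating-factor trick singles out exactly the weight $\lambda(t)$ appearing in \eqref{NC}, and the fractional integration-by-parts formula of Theorem~\ref{IP} does the rest.
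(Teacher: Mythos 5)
Your proposal is correct and follows essentially the same route as the paper's own proof: the same variation $x+\epsilon\eta$, the same linearized ODE for the first variation with integrating factor $\lambda(t)$, the same use of $\theta(a)=\theta(b)=0$ to obtain the weighted integral identity, and the same application of Theorem~\ref{IP} (with $n=1$, so the boundary term is ${_tI^{1-\alpha_j}_b}$ of the weighted momentum, killed by $\eta_j(a)=\eta_j(b)=0$) followed by the fundamental lemma. No substantive differences to report.
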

\begin{proof} Let $x$ be such that $z[x;b]$ defined by equation \eqref{MainProblem} attains an extremum. We will show that \eqref{NC} is a consequence of condition \eqref{principle}. The rate of change of $z$, in the direction of $\eta$, is given by
\begin{equation}\label{variation:1}
\theta (t)=\frac{d}{d\epsilon} \left.z[x+\epsilon\eta;t]\right|_{\epsilon=0}=\frac{d}{d\epsilon} \left.z\left(t,x(t)+\epsilon\eta(t),{^C_aD^\alpha_t}x(t)+\epsilon{^C_aD^\alpha_t}\eta(t)\right)\right|_{\epsilon=0}.
\end{equation}
Applying the variation $\epsilon\eta$ to the argument function in equation \eqref{MainProblem} we get
\begin{equation*}\label{var:1}
\frac{d}{dt}z[x+\epsilon\eta;t]=L(t,x(t)+\epsilon\eta(t),{^C_aD^\alpha_t}x(t)+\epsilon{^C_aD^\alpha_t}\eta(t),z[x+\epsilon\eta;t]).
\end{equation*}
Observe that,
\begin{multline*}
\dot{\theta}(t)=\frac{d}{d t}\theta (t)=\frac{d}{dt}\frac{d}{d\epsilon} \left.z\left(t,x(t)+\epsilon\eta(t),{^C_aD^\alpha_t}x(t)+\epsilon{^C_aD^\alpha_t}\eta(t)\right)\right|_{\epsilon=0}\\
=\frac{d}{d\epsilon }\frac{d}{dt} \left.z\left(t,x(t)+\epsilon\eta(t),{^C_aD^\alpha_t}x(t)+\epsilon{^C_aD^\alpha_t}\eta(t)\right)\right|_{\epsilon=0}\\
=\left.\frac{d}{d \epsilon }L\left(t,x(t)+\epsilon\eta(t),{^C_aD^\alpha_t}x(t)+\epsilon{^C_aD^\alpha_t}\eta(t),z[x+\epsilon\eta;t]\right)\right|_{\epsilon=0}.
\end{multline*}
This gives a differential equation for $\theta (t)$:
$$\dot{\theta}(t)-\frac{\partial L}{\partial z}[x,z](t)\theta(t)=\sum_{j=1}^n\left(\frac{\partial L}{\partial x_j}[x,z](t)\eta_j(t)+\frac{\partial L}{\partial {^C_aD^{\alpha_j}_t}x_j}[x,z](t){^C_aD^{\alpha_j}_t}\eta_j(t)\right)$$
whose solution is
$$\theta(t)\lambda(t)-\theta(a)=\int_a^t \sum_{j=1}^n\left(\frac{\partial L}{\partial x_j}[x,z](\tau)\eta_j(\tau)+\frac{\partial L}{\partial {^C_aD^{\alpha_j}_\tau}x_j}[x,z](\tau){^C_aD^{\alpha_j}_\tau}\eta_j(t)\right)\lambda(\tau)d\tau$$
with notation $\lambda(t)=\exp\left(-\int_a^t \frac{\partial L}{\partial z}[x,z](\tau)d\tau\right)$.
 Observe that $\theta(a)=0$. Indeed, as explained earlier, we evaluate the solution of the equation \eqref{MainProblem} with the same fixed initial condition $z(a)$, independently of the function $x(t)$.
At $t=b$, we get
$$\theta(b)\lambda(b)=\int_a^b \sum_{j=1}^n\left(\frac{\partial L}{\partial x_j}[x,z](t)\eta_j(t)+\frac{\partial L}{\partial {^C_aD^{\alpha_j}_t}x_j}[x,z](t){^C_aD^{\alpha_j}_t}\eta_j(t)\right)\lambda (t)dt$$
Observe that $\theta(b)$ is the variation of $z[x;b]$. If  $x$ is such that $z[x;b]$ defined by equation \eqref{MainProblem} attains an extremum, then $\theta(b)$ is identically zero. Hence, we get
$$\int_a^b\lambda(t)\sum_{j=1}^n\left(\frac{\partial L}{\partial x_j}[x,z](t)\eta_j(t)+\frac{\partial L}{\partial {^C_aD^{\alpha_j}_t}x_j}[x,z](t){^C_aD^{\alpha_j}_t}\eta_j(t)\right)dt=0.$$
Integrating by parts (cf. Theorem \ref{IP}) we get
\begin{multline*}
\int_a^b\sum_{j=1}^n\left[\lambda(t)\frac{\partial L}{\partial x}_j[x,z](t)+{_tD^{\alpha_j}_b}\left(\lambda(t)\frac{\partial L}{\partial {^C_aD^{\alpha_j}_t}x_j}[x,z](t)\right)\right]\eta_j(t)dt\\
+\sum_{j=1}^n\left[\eta_j(t){_tI^{1-\alpha_{j}}_b}\left(\lambda(t)\frac{\partial L}{\partial {^C_aD^{\alpha_j}_t}x_j}[x,z](t)\right)\right]_a^b=0.
\end{multline*}
Since $\eta(a)=\eta(b)=0$, and $\eta$ is an arbitrary function elsewhere, equation \eqref{NC} follows by the fundamental lemma of the calculus of variations.
\end{proof}

\begin{Remark} The function $\theta$ in Eq. \eqref{variation:1} is well defined (see, e.g., Section 2.6 in \cite{Anosov}).
\end{Remark}

\begin{Remark} Let $\alpha_j$ goes to $1$, for all $j=1,\ldots,n$ and the Lagrangian $L$ is of the $C^2$ class. Then equations \eqref{NC} become
$$\lambda(t)\frac{\partial L}{\partial x_j}[x,z](t)-\frac{d}{dt}\left(\lambda(t)\frac{\partial L}{\partial \dot{x}_j}[x,z](t)\right)=0, \quad  j=1,\ldots,n.$$
Then, since $\dot{\lambda}(t)=\displaystyle-\frac{\partial L}{\partial z}[x,z](t)\lambda(t)$, we get
$$\lambda(t)\left[\frac{\partial L}{\partial x_j}[x,z](t)+\frac{\partial L}{\partial z}[x,z](t)\frac{\partial L}{\partial \dot{x}_j}[x,z](t)-\frac{d}{dt}\frac{\partial L}{\partial \dot{x}_j}[x,z](t)\right]=0$$
if and only if
\begin{equation}\label{CH}
\frac{\partial L}{\partial x_j}[x,z](t)+\frac{\partial L}{\partial z}[x,z](t)\frac{\partial L}{\partial \dot{x}_j}[x,z](t)-\frac{d}{dt}\frac{\partial L}{\partial \dot{x}_j}[x,z](t)=0, \quad j=1,\ldots,n.
\end{equation}
To equations \eqref{CH}, Herglotz called the generalized Euler--Lagrange equations \cite{Herglotz}.
\end{Remark}

\begin{ex} Consider the system

$$\left\{ \begin{array}{l}
\dot{z}(t)=\left[{^C_0D^{0.5}_t}x(t)-\frac{2}{\Gamma(2.5)}t^{1.5}\right]^2, \quad \mbox{ with } \, t\in[0,1],\\
z(0)=0,\\
x(0)=0,\, x(1)=1.
\end{array}\right.$$

We wish to find a curve $x$ for which $z[x;1]$ attains the minimum value. Observe that for all $x$, we have that $z$ is an increasing function and thus $z(t)\geq0$, for all $t\in[0,1]$.

 Let $\overline x(t)=t^2$. Then ${^C_0D^{0.5}_t}\overline x(t)=\frac{2}{\Gamma(2.5)}t^{1.5}$ (see e.g. \cite{kilbas}). For such choice of $x$ we get the differential equation $\dot{z}(t)=0$. As $z(0)=0$ the unique solution is $\overline z(t)=0$. Therefore $z[x;1]$ obtain the minimum value, which is $0$, at $\overline x$. In this case $\lambda(t)=1$ and equation \eqref{NC} takes the form
$${_tD_1^{0.5}}\left[{^C_0D^{0.5}_t}x(t)-\frac{2}{\Gamma(2.5)}t^{1.5}\right]=0$$
which is satisfied for $\overline x$.
\end{ex}

\begin{ex} Consider the system with the Lagrangian depending on $z$:

$$\left\{ \begin{array}{l}
\dot{z}(t)=\left[{^C_0D^{0.5}_t}x(t)-\frac{2}{\Gamma(2.5)}t^{1.5}\right]^2\exp(t)+z, \quad \mbox{ with } \, t\in[0,1],\\
z(0)=1,\\
x(0)=0, \, x(1)=1.\\
\end{array}\right.$$

In this case we have $\lambda(t)=\exp(-t)$ and equation \eqref{NC} takes the form
\begin{equation}\label{ex2:1}
{_tD_1^{0.5}}\left[\exp(-t)\left({^C_0D^{0.5}_t}x(t)-\frac{2}{\Gamma(2.5)}t^{1.5}\right)\exp(t)\right]=0.
\end{equation}
Observe that the function $\overline x(t)=t^2$ is a solution to equation \eqref{ex2:1}, but we cannot conclude that it is a minimizer (or maximizer) of the functional $z[x;1]$.
\end{ex}

Note that in order to uniquely determine the unknown function $x$ which is a solution to equation \eqref{NC}, we must consider the system of differential equations
$$\left\{ \begin{array}{l}
\displaystyle \frac{dz}{dt}=L[x,z](t),\\
\displaystyle\lambda(t)\frac{\partial L}{\partial x_j}[x,z](t)+{_tD^{\alpha_j}_b\left(\lambda(t)\frac{\partial L}{\partial {^C_aD^{\alpha_j}_t}x_j}[x,z](t)\right)}=0,\,
\end{array}\right.$$
 with $\lambda(t)=\exp\left(-\int_a^t \frac{\partial L}{\partial z}[x,z](\tau)d\tau \right)$ together with the boundary condition: $z(a)=z_a$, $x(a)=x_a$, $x(b)=x_b$, where $z_a\in \mathbb R$ and $x_a,x_b\in \mathbb R^n$.
In the case where $x_j(b)$, $j\in\{1,\ldots,n\}$, is not specified we need an additional condition known as the transversality condition.

\begin{thm}\label{TNC2}  Let $x$ be such that $z[x;b]$ defined by equation \eqref{MainProblem} attains an extremum.  Then, $x$ is a solution to the system of fractional differential equations \eqref{NC}. If $x_j(b)$, $j\in\{1,\ldots,n\}$, is not specified, then the transversality condition
\begin{equation}\label{trans}
{_tI^{1-\alpha_j}_b}\left(\lambda(t)\frac{\partial L}{\partial {^C_aD^{\alpha_j}_t}x_j}[x,z](t)\right)=0 \quad \mbox{at} \quad t=b
\end{equation}
holds.
\end{thm}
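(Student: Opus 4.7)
The plan is to recycle almost verbatim the first part of the proof of Theorem~\ref{TNC}, but now allowing the variations $\eta$ to have $\eta_j(b)$ arbitrary for those indices $j$ where $x_j(b)$ is not prescribed, while still keeping $\eta(a)=0$. I would first reproduce the computation leading to the differential equation for $\theta(t)$, solve it with the integrating factor $\lambda(t)$, and evaluate at $t=b$ to obtain
$$\int_a^b\lambda(t)\sum_{j=1}^n\left(\frac{\partial L}{\partial x_j}[x,z](t)\eta_j(t)+\frac{\partial L}{\partial {^C_aD^{\alpha_j}_t}x_j}[x,z](t)\,{^C_aD^{\alpha_j}_t}\eta_j(t)\right)dt=0.$$
Integration by parts via Theorem~\ref{IP} (noting that $n_j=1$ because $\alpha_j\in(0,1)$, so the single surviving boundary term involves ${_tI^{1-\alpha_j}_b}$) then yields
\begin{multline*}
\int_a^b\sum_{j=1}^n\left[\lambda(t)\frac{\partial L}{\partial x_j}[x,z](t)+{_tD^{\alpha_j}_b}\left(\lambda(t)\frac{\partial L}{\partial {^C_aD^{\alpha_j}_t}x_j}[x,z](t)\right)\right]\eta_j(t)\,dt\\
+\sum_{j=1}^n\left[\eta_j(t)\,{_tI^{1-\alpha_j}_b}\left(\lambda(t)\frac{\partial L}{\partial {^C_aD^{\alpha_j}_t}x_j}[x,z](t)\right)\right]_a^b=0.
\end{multline*}

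The second step is a standard two-stage argument. First I restrict attention to the subclass of variations with $\eta(a)=\eta(b)=0$; for these, the entire boundary sum drops out and the fundamental lemma of the calculus of variations, applied coordinate by coordinate, recovers exactly the Euler--Lagrange equations \eqref{NC}, which is the first assertion. With \eqref{NC} now established, the first integral in the identity above vanishes for \emph{every} admissible variation, so the boundary sum must vanish on its own for every $\eta$ in the enlarged class.

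Finally, to extract the transversality condition at a free endpoint $x_j(b)$, I would test the boundary sum against a variation $\eta$ with $\eta(a)=0$, with $\eta_i(b)=0$ for every $i\neq j$, and with $\eta_j(b)$ an arbitrary prescribed real number. All boundary contributions at $t=a$ vanish because $\eta(a)=0$, and at $t=b$ only the $j$-th term survives, so the identity collapses to
$$\eta_j(b)\cdot{_tI^{1-\alpha_j}_b}\left(\lambda(t)\frac{\partial L}{\partial{^C_aD^{\alpha_j}_t}x_j}[x,z](t)\right)\bigg|_{t=b}=0,$$
and the arbitrariness of $\eta_j(b)$ yields \eqref{trans}. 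The only subtlety worth flagging is checking that the enlarged class of variations (with $\eta_j(b)$ unrestricted for free $j$) is still admissible in the FVPH setup; this is immediate since $x+\epsilon\eta$ still satisfies $(x+\epsilon\eta)(a)=x_a$, so the same initial datum $z(a)=z_a$ unambiguously defines $z[x+\epsilon\eta;\cdot]$ exactly as before. Beyond this routine verification I do not anticipate any real obstacle, because all the analytic work has already been done in Theorem~\ref{TNC}.
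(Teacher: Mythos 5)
Your proposal is correct and follows essentially the same route as the paper: rerun the variational computation of Theorem~\ref{TNC} with the enlarged class of variations, first restrict to $\eta(b)=0$ to recover \eqref{NC}, then use the surviving boundary term with $\eta_j(b)$ arbitrary to obtain \eqref{trans}. The paper's own proof is just a compressed version of exactly this two-stage argument.
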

\begin{proof} The only difference with respect to the proof of Theorem~\ref{TNC} is that $\eta_j(b)$ may vanish or not. Assuming first that $\eta_j(b)=0$ we deduce equations \eqref{NC}. Therefore $$\sum_{j=1}^n\left[\eta_j(t){_tI^{1-\alpha_{j}}_b}\left(\lambda(t)\frac{\partial L}{\partial {^C_aD^{\alpha_j}_t}x_j}[x,z](t)\right)\right]_a^b=0.$$
Since $\eta(a)=0$,  $\eta_i(b)=0$ for $i\neq j$ and $\eta_j(b)$ is arbitrary equation \eqref{trans} follows.
\end{proof}

When dealing with fractional operators usually we do not need to assume that functions have a nice behavior in the sense of smoothness properties. From the other hand, the drawback is that equations of type \eqref{NC} are in most cases impossible to be solved analytically, and to overcome this problem numerical methods are applied (see, e.g., \cite{Atan2,ford1,ford2,Pooseh}). One of these methods consists in replacing the fractional operator by an expansion that involves only integer order derivatives \cite{Samko}.
Let $\alpha\in(0,1)$, $(c,d)$ be an open interval in $\mathbb R$, and $[a,b]\subset(c,d)$ be such that for each $t\in[a,b]$ the closed ball
$B_{b-a}(t)$, with center at $t$ and radius $b-a$, lies in $(c,d)$. If $x$ is an analytic function in $(c,d)$ then
\begin{equation}\label{approx1}{_aD_t^\alpha}x(t)=\sum_{k=0}^\infty \binom{\alpha}{k}\frac{(t-a)^{k-\alpha}}{\Gamma(k+1-\alpha)}x^{(k)}(t),
\quad \text{ where } \binom{\alpha}{k}=\frac{(-1)^{k-1}\alpha\Gamma(k-\alpha)}{\Gamma(1-\alpha)\Gamma(k+1)}.\end{equation}
Using the well-known relations between the two type of fractional  derivatives,
\begin{equation}\label{approx2}{_a^CD_t^\alpha}x(t)={_aD_t^\alpha}x(t)-\frac{x(a)}{\Gamma(1-\alpha)}(t-a)^{-\alpha},\end{equation}
similar formula is given for the Caputo fractional derivative. \\
For simplicity, let us assume that $n=1$ and fix $N\in\mathbb N$. If $x\in C^{2N}([a,b],\mathbb R)$, $L\in C^{N+1}([a,b]\times \mathbb R^{3} ,\mathbb R)$, then by replacing the fractional derivative in equation \eqref{MainProblem} by formulas \eqref{approx1}--\eqref{approx2}, with the approximation up to order $N$, we obtain
$$\dot{z}(t)=L\left(t,x(t), \sum_{k=0}^N \binom{\alpha}{k}\frac{(t-a)^{k-\alpha}}{\Gamma(k+1-\alpha)}x^{(k)}(t)-\frac{x_a}{\Gamma(1-\alpha)}(t-a)^{-\alpha},z(t)\right)$$
with $t\in[a,b]$.
Define
$$ \overline L(t,x(t),\dot{x}(t),\ldots,x^{(N)}(t),z(t))$$
$$:=L\left(t,x(t), \sum_{k=0}^N \binom{\alpha}{k}\frac{(t-a)^{k-\alpha}}{\Gamma(k+1-\alpha)}x^{(k)}(t)-\frac{x_a}{\Gamma(1-\alpha)}(t-a)^{-\alpha},z(t)\right).$$
Therefore, we got the higher-order variational problems of Herglotz (see \cite{santos})
$$\dot{z}(t)=\overline L(t,x(t),\dot{x}(t),\ldots,x^{(N)}(t),z(t)), \quad \mbox{ with } \, t\in[a,b].$$
 As it was proved in \cite{santos},
if $x_N$ is such that $z_N[x_N;b]$ attains an extremum, then $(x_N,z_N)$ satisfies the Euler--Lagrange equation
$$\sum_{k=0}^N (-1)^k\frac{d^k}{dt^k}\left(\lambda_N(t)\frac{\partial\overline L}{\partial x^{(k)}}(t,x(t),\dot{x}(t),\ldots,x^{(N)}(t),z(t))\right)=0,$$
on $[a,b]$, where $\displaystyle \lambda_N(t)=\exp\left(-\int_a^t \frac{\partial \overline L}{\partial z}(\tau,x_N(\tau),\dot{x}_N(\tau),\ldots,x_N^{(N)}(\tau),z_N(\tau))d\tau \right)$.

\begin{ex} \label{aprox} Consider the system
$$\left\{ \begin{array}{l}
\dot{z}(t)=\left[{^C_0D^{0.5}_t}x(t)-\frac{2}{\Gamma(2.5)}t^{1.5}\right]^2, \quad \mbox{ with } \, t\in[0,1],\\
z(0)=0,\\
x(0)=0, \, x(1)=1.
\end{array}\right.$$
We replace the Caputo fractional derivative by the truncated sum
$${_a^CD_t^\alpha}x(t)\approx \sum_{k=0}^N \binom{\alpha}{k}\frac{(t-a)^{k-\alpha}}{\Gamma(k+1-\alpha)}x^{(k)}(t)-\frac{x(a)}{\Gamma(1-\alpha)}(t-a)^{-\alpha},
$$
where $N\in\mathbb N$ depends on the number of given boundary conditions. Since in the considered system $x(0)=0$ and $x(1)=1$, we take $N=1$.
Therefore, we get
$$\dot{z}(t)=\left[\sum_{k=0}^1 \binom{\alpha}{k}\frac{(t-a)^{k-\alpha}}{\Gamma(k+1-\alpha)}x^{(k)}(t))-\frac{2}{\Gamma(2.5)}t^{1.5}\right]^2.$$
Applying the classical Euler--Lagrange equation we obtain a second order differential equation, whose solution in shown on Figure~\ref{ex}.

\begin{figure}\centering
  \includegraphics[width=8cm]{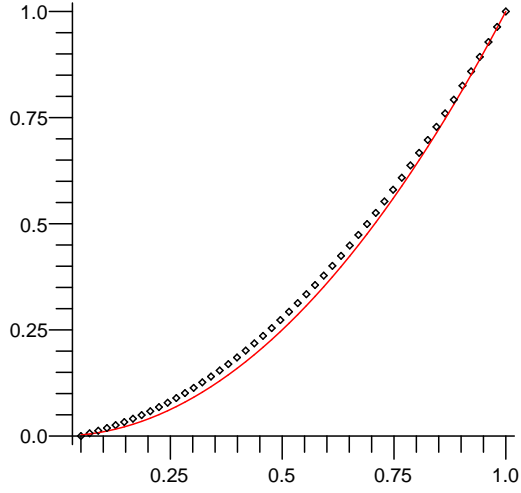}
\caption{Numerical solution (dot line) vs exact solution (continuous line) in Example~\ref{aprox}.}\label{ex}
\end{figure}

\end{ex}

%--------------------------------
\subsection{Higher-order fractional derivatives}

In this subsection, we generalize the fractional Herglotz problem by considering higher-order fractional derivatives.

\begin{thm}\label{TNC5} Consider the system
\begin{equation}\label{System3}\left\{ \begin{array}{l}
\dot{z}(t)=L(t, x(t),{^C_aD^{\alpha}_t}x(t),z(t)), \quad \mbox{ with } \, t\in[a,b],\\
z(a)=z_a,\\
x_i^{(j)}(a)=x_{ia}^{j}, \, x_i^{(j)}(b)=x_{ib}^{j},
\end{array}\right.\end{equation}
with for all $i\in\{1,\ldots,n\}$ and $j\in\{0,\ldots,i-1\}$, where we assume that the following conditions:
\begin{enumerate}
\item $\alpha_i\in(i-1,i)$,
\item $x_i\in C^n([a,b],\mathbb R)$, ${^C_aD^\alpha_t}x_i\in C^1([a,b],\mathbb R)$,
\item $z_a,x_{ia}^{j},x_{ib}^{j}$ are fixed reals,
\item the Lagrangian $L:[a,b]\times\mathbb R^{2n+1}\to\mathbb R$ is of class $C^1$ and the maps
 $t\mapsto \displaystyle\lambda(t)\frac{\partial L}{\partial {^C_aD^{\alpha_i}_t}x_i}[x,z](t)$, $i\in\{1,\ldots,n\}$, are such that $\displaystyle{_tD^{\alpha_i}_b\left(\lambda(t)\frac{\partial L}{\partial {^C_aD^{\alpha_i}_t}x_i}[x,z](t)\right)}$ exist and are continuous on $[a,b]$, where
    $$[x,z](t):=(t,x(t),{^C_aD^{\alpha}_t}x(t),z(t))$$
     $$ \mbox{and} \quad \lambda(t)=\exp\left(-\int_a^t \frac{\partial L}{\partial z}[x,z](\tau)d\tau \right).$$
\end{enumerate}
hold. If  $x=(x_1,\ldots,x_n)$ is such that $z[x;b]$ defined by \eqref{System3} attains an extremum, then $(x_1,\ldots,x_n,z)$ satisfies the system of equations
\begin{equation}\label{NCHO}
\lambda(t)\frac{\partial L}{\partial x_i}[x,z](t)+{_tD^{\alpha_i}_b\left(\lambda(t)\frac{\partial L}{\partial {^C_aD^{\alpha_i}_t}x_i}[ x,z](t)\right)}=0, \quad i=1,\ldots,n,
\end{equation}
on $[a,b]$.
\end{thm}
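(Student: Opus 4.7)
The plan is to adapt the proof of Theorem~\ref{TNC} to the higher-order setting, the essential new ingredient being that the variations $\eta_i$ must now satisfy vanishing boundary conditions on all derivatives up to order $i-1$ at both endpoints, so that the richer boundary contribution produced by Theorem~\ref{IP} with $n=[\alpha_i]+1=i$ drops out. Concretely, I would fix $\eta=(\eta_1,\ldots,\eta_n)\in C^n([a,b],\mathbb R^n)$ with $\eta_i^{(j)}(a)=\eta_i^{(j)}(b)=0$ for all $j=0,\ldots,i-1$ and ${^C_aD^{\alpha_i}_t}\eta_i\in C^1([a,b],\mathbb R)$, and then consider the perturbed family $x+\epsilon\eta$. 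Solving \eqref{System3} with the same fixed initial value $z(a)=z_a$ for every admissible $\epsilon$ produces a well-defined function $z[x+\epsilon\eta;t]$ whose partial derivative in $\epsilon$ exists by the same differentiable-dependence argument invoked in Section~\ref{sec3}.

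Next I would define $\theta(t):=\frac{d}{d\epsilon}z[x+\epsilon\eta;t]\big|_{\epsilon=0}$ and interchange the $t$- and $\epsilon$-derivatives exactly as in the proof of Theorem~\ref{TNC}. This yields the first-order linear ODE
\[
\dot\theta(t)-\frac{\partial L}{\partial z}[x,z](t)\,\theta(t)=\sum_{i=1}^n\left(\frac{\partial L}{\partial x_i}[x,z](t)\,\eta_i(t)+\frac{\partial L}{\partial {^C_aD^{\alpha_i}_t}x_i}[x,z](t)\,{^C_aD^{\alpha_i}_t}\eta_i(t)\right),
\]
with $\theta(a)=0$ since the initial condition for $z$ is fixed. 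Using $\lambda(t)$ as an integrating factor and evaluating at $t=b$ gives
\[
\theta(b)\lambda(b)=\int_a^b\lambda(t)\sum_{i=1}^n\left(\frac{\partial L}{\partial x_i}[x,z](t)\,\eta_i(t)+\frac{\partial L}{\partial {^C_aD^{\alpha_i}_t}x_i}[x,z](t)\,{^C_aD^{\alpha_i}_t}\eta_i(t)\right)dt.
\]
The extremum hypothesis forces $\theta(b)=0$, so the right-hand side vanishes for every admissible $\eta$.

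Now I would apply Theorem~\ref{IP} term by term to transfer the Caputo derivative from $\eta_i$ onto the coefficient $\lambda(t)\,\partial L/\partial{^C_aD^{\alpha_i}_t}x_i[x,z](t)$, which is legal thanks to assumption~(4). With $n=i$ in the formula of Theorem~\ref{IP}, the integration-by-parts identity produces the main term $\eta_i(t)\,{_tD^{\alpha_i}_b}(\lambda\,\partial L/\partial{^C_aD^{\alpha_i}_t}x_i)$ plus boundary contributions of the shape $\bigl[{_tD^{\alpha_i+j-i}_b}(\lambda\,\partial L/\partial{^C_aD^{\alpha_i}_t}x_i)\cdot(-1)^{i-1-j}\eta_i^{(i-1-j)}(t)\bigr]_a^b$ for $j=0,\ldots,i-1$. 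Every one of these boundary terms vanishes because $\eta_i^{(k)}(a)=\eta_i^{(k)}(b)=0$ for $k=0,\ldots,i-1$, matching exactly the fixed data $x_i^{(j)}(a)=x_{ia}^{j}$, $x_i^{(j)}(b)=x_{ib}^{j}$.

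We are left with $\int_a^b\sum_{i=1}^n\bigl[\lambda(t)\frac{\partial L}{\partial x_i}[x,z](t)+{_tD^{\alpha_i}_b}(\lambda(t)\frac{\partial L}{\partial{^C_aD^{\alpha_i}_t}x_i}[x,z](t))\bigr]\eta_i(t)\,dt=0$ for every admissible $\eta$. Selecting, for each fixed index $i$, variations $\eta_k\equiv 0$ for $k\neq i$ and $\eta_i$ arbitrary in the interior of $[a,b]$ (with the prescribed vanishing conditions at the endpoints), the fundamental lemma of the calculus of variations yields \eqref{NCHO}. The main obstacle I foresee is purely bookkeeping: verifying that the order of vanishing imposed on $\eta_i$ is precisely what is needed to kill the $i$ boundary terms generated by Theorem~\ref{IP} for the order-$\alpha_i$ derivative — and this works out cleanly because $[\alpha_i]+1=i$ matches the number of fixed derivatives $x_i^{(0)},\ldots,x_i^{(i-1)}$ at each endpoint.
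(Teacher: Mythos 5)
Your proposal is correct and follows essentially the same route as the paper's own proof: perturb by $\epsilon\eta$ with $\eta_i^{(j)}(a)=\eta_i^{(j)}(b)=0$ for $j=0,\ldots,i-1$, derive and solve the linear ODE for $\theta$, use $\theta(a)=\theta(b)=0$ to obtain the vanishing integral, integrate by parts via Theorem~\ref{IP} with $n=[\alpha_i]+1=i$, and kill the boundary terms ${_tD^{\alpha_i+j-i}_b}(\cdot)\cdot(-1)^{i-1-j}\eta_i^{(i-1-j)}$ before invoking the fundamental lemma. Your bookkeeping observation that the $i$ fixed endpoint derivatives match exactly the $i$ boundary contributions is precisely the point the paper relies on.
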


\begin{proof} Let $x$ be such that $z[x;b]$ defined by \eqref{System3} attains an extremum. Consider variations of type $x+\epsilon\eta$ with $\eta=(\eta_1,\ldots,\eta_n)$ satisfying $\eta_i^{(j)}(a)=\eta_i^{(j)}(b)=0$, for all $i\in\{1,\ldots,n\}$ and $j\in\{0,\ldots,i-1\}$. The rate of change of $z$ in the direction of $\eta$ is given by
$$\theta (t)=\frac{d}{d\epsilon} \left.z[x+\epsilon\eta;t]\right|_{\epsilon=0}\frac{d}{d\epsilon} = \left.z\left(t,x(t)+\epsilon\eta(t),{^C_aD^\alpha_t}x(t)+\epsilon{^C_aD^\alpha_t}\eta(t)\right)\right|_{\epsilon=0}.$$
Applying $\epsilon\eta$ to differential equation in \eqref{System3} we get
$$\frac{d}{dt}z[x+\epsilon\eta;t]=L(t, x(t)+\epsilon\eta(t),{^C_aD^{\alpha}_t}x(t)+\epsilon{^C_aD^{\alpha}_t}\eta(t),z[x+\epsilon\eta;t]).$$
Then, as in the proof of Theorem~\ref{NC}, we obtain the linear ODE with respect to $\theta$:
$$\dot{\theta}(t)=\sum_{i=1}^n\left[\frac{\partial L}{\partial x_i}[x,z](t)\eta_i(t)+\frac{\partial L}{\partial {^C_aD^{\alpha_i}_t}x_i}[ x](t){^C_aD^{\alpha_i}_t}\eta_i(t)\right]+\frac{\partial L}{\partial z}[x,z](t)\theta(t).$$
Solving the linear ODE and using the fact that $\theta(a)=\theta(b)=0$ we get
$$\int_a^b\lambda(t)\sum_{i=1}^n\left[\frac{\partial L}{\partial x_i}[x,z](t)\eta_i(t)+\frac{\partial L}{\partial {^C_aD^{\alpha_i}_t}x_i}[x,z](t){^C_aD^{\alpha_i}_t}\eta_i(t)\right]dt=0.$$
Integrating by parts we obtain
$$\int_a^b\sum_{i=1}^n\left[\lambda(t)\frac{\partial L}{\partial x_i}[x,z](t)+{_tD^{\alpha_i}_b}\left(\lambda(t)\frac{\partial L}{\partial {^C_aD^{\alpha_i}_t}x_i}[x,z](t)\right)\right]\eta_i(t)dt$$
$$+\left[\sum_{i=1}^n\sum_{j=0}^{i-1}(-1)^{i-1-j}\eta_i^{(i-1-j)}(t){_tD^{\alpha_i+j-i}_b}\left(\lambda(t)\frac{\partial L}{\partial {^C_aD^{\alpha_i}_t}x_i}[x,z](t)\right)\right]_a^b=0.$$
Since $\eta_i^{(j)}(a)=\eta_i^{(j)}(b)=0$ for all $i\in\{1,\ldots,n\}$ and $j\in\{0,\ldots,i-1\}$, and $\eta$ is arbitrary  elsewhere, the theorem is proven.
\end{proof}

Next theorem generalizes transversality conditions given in Theorem~\ref{TNC2}.

\begin{thm}\label{TNC6} Consider system \eqref{System3} with an exception that $x_i^{(j)}(b)$ may take any value, for all $i\in\{1,\ldots,n\}$ and $j\in\{0,\ldots,i-1\}$. If $x=(x_1,\ldots,x_n)$ is such that $z[x;b]$ attains an extremum, then $(x_1,\ldots,x_n,z)$ satisfies the system of equations \eqref{NCHO} on $[a,b]$, and the transversality conditions
$${_tD^{\alpha_i+j-i}_b}\left(\lambda(t)\frac{\partial L}{\partial {^C_aD^{\alpha_i}_t}x_i}[x,z](t)\right)=0 \quad \mbox{at} \quad t=b,$$
for all $i\in\{1,\ldots,n\}$ and all $j\in\{0,\ldots, i-1\}$.
\end{thm}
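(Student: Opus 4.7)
The plan is to follow the skeleton of Theorem~\ref{TNC2}, transposed to the higher-order setting of Theorem~\ref{TNC5}: the integration-by-parts identity of Theorem~\ref{IP} always produces both a bulk integral and a boundary bracket at $t=a$ and $t=b$, and in Theorem~\ref{TNC5} the bracket was killed by the assumptions $\eta_i^{(j)}(a)=\eta_i^{(j)}(b)=0$. Here, with $x_i^{(j)}(b)$ free, the bracket at $b$ survives and must be exploited to read off the transversality conditions.

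First, I would rerun the derivation of Theorem~\ref{TNC5} verbatim (formation of $\theta(t)$, its linear ODE, integration using the factor $\lambda$, and $\theta(a)=\theta(b)=0$) up to the intermediate identity
$$\int_a^b\sum_{i=1}^n\left[\lambda(t)\frac{\partial L}{\partial x_i}[x,z](t)+{_tD^{\alpha_i}_b}\left(\lambda(t)\frac{\partial L}{\partial {^C_aD^{\alpha_i}_t}x_i}[x,z](t)\right)\right]\eta_i(t)\,dt$$
$$+\left[\sum_{i=1}^n\sum_{j=0}^{i-1}(-1)^{i-1-j}\eta_i^{(i-1-j)}(t){_tD^{\alpha_i+j-i}_b}\left(\lambda(t)\frac{\partial L}{\partial {^C_aD^{\alpha_i}_t}x_i}[x,z](t)\right)\right]_a^b=0,$$
now valid for every admissible $\eta\in C^n([a,b],\mathbb R^n)$ with ${^C_aD^\alpha_t}\eta\in C^1$ satisfying only the left-endpoint constraints $\eta_i^{(j)}(a)=0$ for $i\in\{1,\ldots,n\}$, $j\in\{0,\ldots,i-1\}$.

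Second, I would first restrict to the subclass for which additionally $\eta_i^{(j)}(b)=0$ for all admissible $i,j$. On this subclass the boundary bracket is identically zero, and the fundamental lemma of the calculus of variations applied to the arbitrary interior values of $\eta_i$ yields the Euler--Lagrange system \eqref{NCHO} on $[a,b]$, exactly as in Theorem~\ref{TNC5}. With \eqref{NCHO} established, the bulk integral vanishes for \emph{every} admissible $\eta$, and, since the left-endpoint derivatives of $\eta$ remain zero, the identity reduces to
$$\sum_{i=1}^n\sum_{j=0}^{i-1}(-1)^{i-1-j}\,\eta_i^{(i-1-j)}(b)\cdot{_tD^{\alpha_i+j-i}_b}\left(\lambda(t)\frac{\partial L}{\partial {^C_aD^{\alpha_i}_t}x_i}[x,z](t)\right)\bigg|_{t=b}=0.$$

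Third, to isolate each transversality condition I would, for each fixed pair $(i_0,j_0)$ with $i_0\in\{1,\ldots,n\}$ and $j_0\in\{0,\ldots,i_0-1\}$, choose an admissible variation $\eta$ whose only nonzero endpoint derivative is $\eta_{i_0}^{(i_0-1-j_0)}(b)$. Such $\eta$ certainly exist: in the component $x_{i_0}$ take a smooth function vanishing together with enough derivatives at $t=a$ and tangent at $t=b$ to a polynomial prescribing a single derivative value, and set the other components to zero. The main obstacle is precisely this step: one has to verify that the $n^2$ endpoint derivatives $\{\eta_i^{(i-1-j)}(b)\}_{i,j}$ can be prescribed independently within the admissible class, which is a routine Hermite-interpolation fact but is the only point beyond simply recycling the computation of Theorem~\ref{TNC5}. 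Given this, each choice kills all terms except the one indexed by $(i_0,j_0)$, and arbitrariness of the surviving value forces the stated transversality condition.
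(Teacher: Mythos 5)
Your proposal is correct and follows exactly the route the paper itself takes for the first-order analogue (Theorem~\ref{TNC2}): rerun the extremum computation of Theorem~\ref{TNC5} to the identity containing both the bulk integral and the boundary bracket, first take variations with all endpoint derivatives zero at $b$ to recover \eqref{NCHO}, then use the arbitrariness of the surviving endpoint values $\eta_i^{(i-1-j)}(b)$ to isolate each transversality condition. The paper in fact states Theorem~\ref{TNC6} without proof, and your argument --- including the explicit Hermite-interpolation remark justifying that the endpoint derivatives can be prescribed independently, a point the paper glosses over even in Theorem~\ref{TNC2} --- supplies it correctly.
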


%--------------------------------------------------

\subsection{Several independent variables}

The fractional variational principle of Herglotz can be generalized to one involving several independent variables. In the following, the independent variables will be the time variable $t\in[a,b]$ and the spacial coordinates $x=(x_1,\ldots,x_n)\in \Omega=\prod_{i=1}^n[a_i,b_i]$, $n\in\mathbb N$. The argument function of the functional $z[u;b]$ defined by the variational principle will be $u=(u_1(t,x),\ldots,u_m(t,x))$, $m\in\mathbb N$. In what follows, for $\alpha=(\alpha_1,\ldots,\alpha_m)$ and $\beta_i=(\beta_{i,1},\ldots,\beta_{i,m})$, $i=1,\ldots,n$ we will use the following notation:
$${^C_aD^{\alpha}_t}u=({^C_aD^{\alpha_1}_t}u_1,\ldots,{^C_aD^{\alpha_m}_t}u_m),$$
$${^C_{a_i}D^{\beta_i}_{x_i}}u=({^C_{a_i}D^{\beta_{i,1}}_{x_i}}u_1,\ldots,{^C_{a_i}D^{\beta_{i,m}}_{x_i}}u_m),\quad i=1,\ldots,n.$$
The fractional variational principle with several independent variables is as follows:\\
{\it Let the functional $z=z[u;b]$ of $u=u(t,x)$ be given by an integro-differential equation of the form
\begin{equation}\label{GVPS}
\frac{d z}{dt}=\displaystyle \int_\Omega L(t,x,u(t,x),{^C_aD^{\alpha}_t}u(t,x),{^C_{a_1}D^{\beta_1}_{x_1}}u(t,x),\ldots,{^C_{a_n}D^{\beta_n}_{x_n}}u(t,x),z(t))\, d^nx,
\end{equation}
$t\in[a,b]$, where $d^nx=dx_1\ldots dx_n$; and let the following conditions be satisfied
\begin{enumerate}
\item $u(t,x)=g(t,x)$ for $(t,x)\in \partial P$, where $P=[a,b]\times\Omega$, $\partial P$ is the boundary of $P$ and $g:\partial P\rightarrow R^m$ is a given function;
\item  $\alpha_j, \beta_{i,j}\in(0,1)$,
\item  $u_j\in C^1(P,\mathbb R)$, ${^C_aD^{\alpha_j}_t}u_j, {^C_{a_i}D^{\beta_{i,j}}_{x_i}}u_j \in C^1(P,\mathbb R)$,
\item $L:P\times\mathbb R^{m(n+2)+1}\to\mathbb R$ is of class $C^1$ and the maps\\
$(t,x)\mapsto \displaystyle\lambda(t)\frac{\partial L}{\partial {^C_aD^{\alpha_j}_t}u_j}[u,z](t,x)$ and $(t,x)\mapsto \displaystyle \lambda(t)\frac{\partial L}{\partial {^C_{a_i}D^{\beta_{i,j}}_{x_i}}u_j}[u,z](t,x)$ \\
are such that
$\displaystyle {_tD^{\alpha_j}_b}\left(\lambda(t)\frac{\partial L}{\partial {^C_aD^{\alpha_j}_t}u_j}[u,z](t,x)\right)$ and
$\displaystyle _{x_i}D^{\beta_{i,j}}_{b_i} \left(\lambda(t)\frac{\partial L}{\partial {^C_{a_i}D^{\beta_{i,j}}_{x_i}}u_j}[u,z](t)\right)$
 exist and are continuous on $P$,
where
$$[u,z](t,x):=(t,x,u(t,x),{^C_aD^{\alpha}_t}u(t,x),{^C_{a_1}D^{\beta_1}_{x_1}}u(t,x),\ldots,{^C_{a_n}D^{\beta_n}_{x_n}}u(t,x),z(t),$$
$$\lambda(t):=\exp\left(-\int_a^t\int_\Omega  \frac{\partial L}{\partial z}[u,z](\tau,x) d^nx d\tau \right).$$
\end{enumerate}
for all $i\in\{1,\ldots,n\}$ and $j\in\{0,\ldots,m\}$.

Consider $\eta\equiv (\eta_1(t,x),\ldots,\eta_m(t,x))\in C^1([a,b]\times\Omega,\mathbb R^m)$ that satisfies the boundary conditions
$\eta(t,x)=0$ for $(t,x)\in \partial P$ and such that ${^C_aD^{\alpha_j}_t}\eta_j, {^C_{a_i}D^{\beta_{i,j}}_{x_i}}\eta_j \in C^1(P,\mathbb R)$. Then, the value of the functional $z[u;b]$ is an extremum for function $u$  which satisfy the condition
\begin{equation}\label{principle:md}
\frac{d}{d\epsilon} \left.z[u+\epsilon\eta;b]\right|_{\epsilon=0}=\frac{d}{d\epsilon} \left.z(b,[u](b,x))\right|_{\epsilon=0}=0,
\end{equation}}
where $[u](t,x):=(t,x,u(t,x),{^C_aD^{\alpha}_t}u(t,x),{^C_{a_1}D^{\beta_1}_{x_1}}u(t,x),\ldots,{^C_{a_n}D^{\beta_n}_{x_n}}u(t,x)).$
It should be pointed that when a variation $\epsilon\eta$ is applied to $u$ the equation \eqref{GVPS}, defining the functional $z$,
must be solved with the same fixed initial condition $z_a$ at $t=a$ and the solution evaluated at the
same fixed final time $t=b$ for all varied argument functions $\epsilon\eta$.

\begin{thm}\label{TNC7}
If $u$ is such that the functional $z[u;b]$ defined by equation \eqref{GVPS} attains an extremum,
then  $u$ is a solution of the system of equations
\begin{multline}\label{EL:md}
\lambda(t)\frac{\partial L}{\partial u_j}[u,z](t,x)
+{_tD^{\alpha_j}_b}\left(\lambda(t)\frac{\partial L}{\partial {^C_aD^{\alpha_j}_t}u_j}[u,z](t,x)\right)\\
+\sum_{i=1}^n {_{x_i}D^{\beta_{i,j}}_{b_i}} \left(\lambda(t)\frac{\partial L}{\partial {^C_{a_i}D^{\beta_{i,j}}_{x_i}}u_j}[u,z](t,x)\right)=0,
\end{multline}
$j=1,\ldots,m$.
\end{thm}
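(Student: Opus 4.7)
The plan is to mimic the proof of Theorem~\ref{TNC}, adapting the one-variable argument to the integro-differential setting. I would begin by taking a variation $u+\epsilon\eta$ with $\eta$ satisfying $\eta(t,x)=0$ on $\partial P$, and defining
\[
\theta(t):=\left.\frac{d}{d\epsilon}z[u+\epsilon\eta;t]\right|_{\epsilon=0}.
\]
Because $z(a)$ is a fixed initial datum common to all variations, $\theta(a)=0$; and condition \eqref{principle:md} gives $\theta(b)=0$. Differentiating the defining equation \eqref{GVPS} in $\epsilon$ and interchanging $\frac{d}{dt}$ with $\frac{d}{d\epsilon}$ (justified under the stated $C^1$-hypotheses), I obtain a linear ODE
\[
\dot\theta(t)-\left(\int_\Omega\frac{\partial L}{\partial z}[u,z](t,x)\,d^nx\right)\theta(t)
=\int_\Omega\sum_{j=1}^m F_j(t,x)\,d^nx,
\]
where $F_j(t,x)$ collects the terms $\frac{\partial L}{\partial u_j}\eta_j+\frac{\partial L}{\partial {^C_aD_t^{\alpha_j}}u_j}{^C_aD_t^{\alpha_j}}\eta_j+\sum_{i=1}^n\frac{\partial L}{\partial {^C_{a_i}D_{x_i}^{\beta_{i,j}}}u_j}{^C_{a_i}D_{x_i}^{\beta_{i,j}}}\eta_j$.

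Next I multiply through by the integrating factor $\lambda(t)$, which matches the definition given in condition~4, so that the left-hand side becomes $\frac{d}{dt}(\lambda(t)\theta(t))$. Integrating from $a$ to $b$ and using $\theta(a)=\theta(b)=0$ yields
\[
\int_a^b\int_\Omega\lambda(t)\sum_{j=1}^m F_j(t,x)\,d^nx\,dt=0.
\]
Then I apply the fractional integration by parts formula of Theorem~\ref{IP}: once in the $t$-variable to the $\frac{\partial L}{\partial {^C_aD_t^{\alpha_j}}u_j}{^C_aD_t^{\alpha_j}}\eta_j$ term (transferring the Caputo derivative onto the multiplier as a right Riemann--Liouville derivative), and once in each $x_i$-variable to the corresponding spatial terms, after using Fubini to put the relevant integral on the inside. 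The hypothesis $\alpha_j,\beta_{i,j}\in(0,1)$ means each integration by parts produces a single boundary term, of the form $\bigl[\eta_j\cdot {_tI_b^{1-\alpha_j}}(\lambda\,\partial L/\partial{^C_aD_t^{\alpha_j}}u_j)\bigr]_a^b$ and analogously in space.

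All such boundary terms vanish because $\eta=0$ on $\partial P$, leaving
\[
\int_a^b\int_\Omega\sum_{j=1}^m\left[\lambda(t)\frac{\partial L}{\partial u_j}+{_tD_b^{\alpha_j}}\!\left(\lambda\frac{\partial L}{\partial {^C_aD_t^{\alpha_j}}u_j}\right)+\sum_{i=1}^n{_{x_i}D_{b_i}^{\beta_{i,j}}}\!\left(\lambda\frac{\partial L}{\partial {^C_{a_i}D_{x_i}^{\beta_{i,j}}}u_j}\right)\right]\eta_j(t,x)\,d^nx\,dt=0.
\]
Since each $\eta_j$ is arbitrary inside $P$, the fundamental lemma of the calculus of variations (applied on the $(n{+}1)$-dimensional domain $P$, component by component in $j$) gives \eqref{EL:md}. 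The main obstacle I expect is the careful bookkeeping in step three: verifying that one may interchange $\frac{d}{dt}$ and $\frac{d}{d\epsilon}$, differentiate under the integral over $\Omega$, and then swap the order of $t$- and $x_i$-integrations so that Theorem~\ref{IP} is applicable in each variable separately. All of these are justified by the $C^1$ smoothness conditions in assumptions 3--4 together with the compactness of $P$, but the notational apparatus is heavy.
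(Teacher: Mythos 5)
Your proposal is correct and follows essentially the same route as the paper's own proof: define $\theta$, derive the linear ODE $\dot\theta - B\theta = A$, use $\theta(a)=\theta(b)=0$ and the integrating factor $\lambda$ to get $\int_a^b\lambda(t)A(t)\,dt=0$, integrate by parts in $t$ and in each $x_i$, kill the boundary terms with $\eta|_{\partial P}=0$, and invoke the fundamental lemma. The only difference is cosmetic: you spell out the integrating-factor computation and the Fubini/interchange justifications that the paper leaves implicit.
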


\begin{proof}
The idea of the proof is similar to that of the proof of Theorem \ref{TNC}. We show that equations \eqref{EL:md} are a consequence of condition \eqref{principle:md}. The rate of change of $z$, in the direction of $\eta$, is given by
\begin{equation}\label{variation}
\theta (t)=\frac{d}{d\epsilon} \left.z[u+\epsilon\eta;t]\right|_{\epsilon=0}=\frac{d}{d\epsilon} \left.z(t,[u](t,x))\right|_{\epsilon=0}.
\end{equation}
Applying the variation $\epsilon\eta$ to the argument function in equation \eqref{GVPS}, differentiating with respect to $\epsilon$ and then setting $\epsilon=0$ we get a differential equation for $\theta (t)$:
$$\frac{d\theta}{dt}(t)= \int_\Omega\left[\sum_{j=1}^m \frac{\partial L}{\partial u_j}[u,z](t,x)\eta_j
+\sum_{j=1}^m\frac{\partial L}{\partial {^C_aD^{\alpha_j}_t}u_j}[u,z](t,x){^C_aD^{\alpha_j}_t}\eta_j\right.$$
$$\left.+\sum_{i=1}^n\sum_{j=1}^m\frac{\partial L}{\partial {^C_{a_i}D^{\beta_{i,j}}_{x_i}}u_j}[u,z](t,x){^C_{a_i}D^{\beta_{i,j}}_{x_i}}\eta_j
+\frac{\partial L}{\partial z}[u,z](t,x)\theta(t)\right]d^nx.$$
Denoting
\begin{multline*}
A(t)=\int_\Omega\sum_{j=1}^m \left[\frac{\partial L}{\partial u_j}[u,z](t,x)\eta_j
+\frac{\partial L}{\partial {^C_aD^{\alpha_j}_t}u_j}[u,z](t,x){^C_aD^{\alpha_j}_t}\eta_j\right.\\
\left.+\sum_{i=1}^n\frac{\partial L}{\partial {^C_{a_i}D^{\beta_{i,j}}_{x_i}}u_j}[u,z](t,x){^C_{a_i}D^{\beta_{i,j}}_{x_i}}\eta_j\right]d^nx
\end{multline*}
and
$$B(t)=\int_\Omega  \frac{\partial L}{\partial z}[u,z](t,x) d^nx.$$
we get
\begin{equation}\label{ODE}
\frac{d\theta}{dt}(t)-B(t)\theta(t)=A(t).
\end{equation}
Solving equation \eqref{ODE} and taking into consideration that $\theta(a)=\theta(b)=0$ (see the proof of Theorem \ref{TNC}) we obtain
$$\int_a^b\lambda(t)A(t)dt=0.$$
Integrating by parts we get
$$\int_a^b\int_\Omega\sum_{j=1}^m \left[\lambda(t)\frac{\partial L}{\partial u_j}[u,z](t,x)
+{_tD^{\alpha_j}_b}\left(\lambda(t)\frac{\partial L}{\partial {^C_aD^{\alpha_j}_t}u_j}[u,z](t,x)\right)\right.$$
$$\left.+\sum_{i=1}^n {_{x_i}D^{\beta_{i,j}}_{b_i}} \left(\lambda(t)\frac{\partial L}{\partial {^C_{a_i}D^{\beta_{i,j}}_{x_i}}u_j}[u,z](t,x)\right)\right]\eta_j d^nxdt=0$$
as $\eta$ satisfies the boundary conditions $\eta(a,x)=\eta(b,x)=0$ for $x\in \Omega$, and $\eta(t,x)=0$  for $x\in\partial \Omega$, $a\leq t \leq b$. Finally, since $\eta_j$ are arbitrary functions, it follows that for all $j\in\{1,\ldots,m\}$,
\begin{multline}
\lambda(t)\frac{\partial L}{\partial u_j}[u,z](t,x)
+{_tD^{\alpha_j}_b}\left(\lambda(t)\frac{\partial L}{\partial {^C_aD^{\alpha_j}_t}u_j}[u,z](t,x)\right)+\\
\sum_{i=1}^n {_{x_i}D^{\beta_{i,j}}_{b_i}} \left(\lambda(t)\frac{\partial L}{\partial {^C_{a_i}D^{\beta_{i,j}}_{x_i}}u_j}[u,z](t,x)\right)=0
\end{multline}
on $[a,b]\times\Omega$.
\end{proof}

\begin{Remark}
We note that it is straightforward to obtain the transversality conditions for the Herglotz's fractional variational problems with several variables (cf. \cite{book:MT}).
\end{Remark}

%%%%%%%%%%%%%%%%%%%%%%%%%%%%%%%%%%%%%%%%%%%%%%%%%%%%%%%%%%%%%%%%%%%%%%%%%%%%%%%%5
\section{Noether-type theorem}

Symmetric principles are key issues in mathematics and physics. There are close relationships
between symmetries and conserved quantities. Noether first proposed the famous Noether symmetry theorem which describes the
universal fact that invariance of the action functional with respect to some family of parameter transformations
gives rise to the existence of certain conservation laws, i.e., expressions
preserved along the solutions of the Euler-Lagrange equation. In this Section we extend Noether's theorem to one that holds for fractional variational problems of Herglotz-type with one (Theorem~\ref{Noether:without:time}) or several independent variables (Theorem~\ref{Noether:without:time:m}).

Consider the one-parameter family of transformations
\begin{equation}\label{tra:fam}
\bar{x}_j=h_j(t,x,s), \quad j=1,\ldots,n,
\end{equation}
depending on a parameter $s$, $s\in (-\varepsilon,\varepsilon)$, where  $h_j$ are of class $C^2$ and such that $h_j(t,x,0)=x_j$ for all $j\in\{1,\ldots,n\}$ and for all $(t,x)\in [a,b]\times \mathbb R^n$. By Taylor's formula we have
\begin{equation}\label{tra:fam:T}
h_j(t,x,s)=h_j(t,x,0)+s\xi_j(t,x)+o(s)=x_j+s\xi_j(t,x)+o(s),
\end{equation}
provided that $|s|\leq \varepsilon$, where $\xi_j(t,x)=\frac{\partial}{\partial s}h_j(t,x,s)|_{s=0}$. For $|s|\leq \varepsilon$ the linear approximation to transformation \eqref{tra:fam} is $\bar{x}_j\approx x_j+s\xi_j(t,x)$. \\
This transformation leads, for any fixed
sufficiently small parameter $s$ and any fixed function $x(\cdot)=(x_1(\cdot),\ldots,x_n(\cdot))$, to
$\overline x(\cdot)=( \overline x_1(\cdot),\ldots,\overline x_n(\cdot))$ given by
$$\bar{x}_j(t)=h_j(t,x(t),s), \quad j=1,\ldots,n.$$

Denote by $\theta=\theta(t)$ the total variation produced by the family of transformations \eqref{tra:fam:T}, that is

\begin{equation}\label{variation:1t}
\theta (t)=\frac{d}{ds} \left.\bar{z}[x+s\xi;t]\right|_{s=0}.
\end{equation}

We define the invariance of the functional $z$, defined by differential equation \eqref{MainProblem}, under transformation \eqref{tra:fam:T} as follows.

\begin{Definition}\label{def:in}
The transformation \eqref{tra:fam:T} leaves the functional $z$, defined by differential equation \eqref{MainProblem}, invariant if $\theta (t)\equiv0$.
\end{Definition}

\begin{Remark}
When $s=0$, the following holds $\bar{z}[\bar{x};t]=z[x;t]$.
\end{Remark}

\begin{thm}\label{Noether:without:time}
If the functional $z$ defined by differential equation \eqref{MainProblem} is invariant in the sense of definition~\ref{def:in}, then
\begin{equation}\label{eq:Noether:without:time}
\sum_{j=1}^n\mathcal{D}^{\alpha_j}\left[\lambda(t)\frac{\partial L}{\partial {^C_aD^{\alpha_j}_t}x_j}[x,z](t),\xi_j(t,x)\right]=0,
\end{equation}
where $\lambda(t):=\exp\left(-\int_a^t \frac{\partial L}{\partial z}[x,z](\tau)d\tau \right)$ and $\mathcal{D}^{\alpha}[f,g]:=f {^C_aD^{\alpha}_t}\cdot g- g\cdot{_tD^{\alpha}_b}f$, holds along the solutions of the generalized fractional Euler--Lagrange equations \eqref{NC}.
\end{thm}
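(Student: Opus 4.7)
The plan is to exploit the ODE for $\theta(t)$ derived in the proof of Theorem~\ref{TNC}, specialized to the infinitesimal generator $\xi$, and then substitute the Euler--Lagrange equations \eqref{NC} to cancel the ``position'' terms in favor of the desired bilinear expression $\mathcal{D}^{\alpha_j}$.

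First I would set $\eta_j(t) = \xi_j(t,x(t))$ and replay the derivation from the proof of Theorem~\ref{TNC}. Applying the one-parameter family of transformations \eqref{tra:fam:T} with parameter $s$ in place of $\epsilon$, and differentiating the defining equation \eqref{MainProblem} with respect to $s$ at $s=0$, yields the linear ODE
\begin{equation*}
\dot{\theta}(t) - \frac{\partial L}{\partial z}[x,z](t)\,\theta(t)
= \sum_{j=1}^n\left(\frac{\partial L}{\partial x_j}[x,z](t)\,\xi_j + \frac{\partial L}{\partial {^C_aD^{\alpha_j}_t}x_j}[x,z](t)\,{^C_aD^{\alpha_j}_t}\xi_j\right).
\end{equation*}
Invariance in the sense of Definition~\ref{def:in} means $\theta(t)\equiv 0$, so both $\theta$ and $\dot\theta$ vanish identically, and the right-hand side above must vanish pointwise on $[a,b]$.

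Next I would multiply the resulting identity by $\lambda(t)>0$ and invoke the Euler--Lagrange equations \eqref{NC}, which along extremals give
\begin{equation*}
\lambda(t)\frac{\partial L}{\partial x_j}[x,z](t) = -\,{_tD^{\alpha_j}_b}\!\left(\lambda(t)\frac{\partial L}{\partial {^C_aD^{\alpha_j}_t}x_j}[x,z](t)\right).
\end{equation*}
Substituting this into each summand converts $\lambda\,\partial_{x_j}L\cdot\xi_j$ into $-\xi_j\cdot {_tD^{\alpha_j}_b}(\lambda\,\partial_{{^C_aD^{\alpha_j}_t}x_j}L)$, while the remaining term is already $\lambda\,\partial_{{^C_aD^{\alpha_j}_t}x_j}L\cdot{^C_aD^{\alpha_j}_t}\xi_j$. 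By the definition $\mathcal{D}^{\alpha}[f,g] := f\cdot{^C_aD^{\alpha}_t}g - g\cdot{_tD^{\alpha}_b}f$ the sum collapses exactly to \eqref{eq:Noether:without:time}.

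The argument is essentially algebraic once the ODE for $\theta$ is in hand, so there is no genuine obstacle; the only subtle point worth being explicit about is the commutation of $\frac{d}{dt}$ and $\frac{d}{ds}$ needed to justify the formula for $\dot{\theta}$, which relies on the $C^2$ regularity of $z$ as a function of its arguments (already invoked in the proof of Theorem~\ref{TNC} via \cite{Anosov}) together with the $C^2$ assumption on the family $h_j$, ensuring that $\xi_j(t,x(t))$ and ${^C_aD^{\alpha_j}_t}\xi_j(t,x(t))$ are well defined and the chain rule under the Caputo derivative can be applied in the sense of the linearization formula used above.
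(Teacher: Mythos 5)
Your proposal is correct and follows essentially the same route as the paper: derive the linear ODE for $\theta$ by differentiating \eqref{MainProblem} along the one-parameter family, use invariance to conclude the source term vanishes, and then substitute the Euler--Lagrange equations \eqref{NC} to rewrite the result as $\sum_j\mathcal{D}^{\alpha_j}[\cdot,\cdot]=0$. The only (harmless) difference is that you read off $\dot\theta\equiv 0$ directly from $\theta\equiv 0$ to kill the right-hand side pointwise, whereas the paper first integrates the ODE and then uses that the integral vanishes for arbitrary upper limit.
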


\begin{proof}
Applying transformations \eqref{tra:fam:T}  to equation \eqref{MainProblem} we get
\begin{equation}\label{var:1:n}
\frac{d}{dt}\bar{z}=L(t,\bar{x}(t),{^C_aD^\alpha_t}\bar{x}(t),\bar{z}(t)).
\end{equation}
Differentiating both sides of \eqref{var:1:n} with respect to $s$ and then setting $s=0$, we get a differential equation for $\theta (t)$
$$\dot{\theta}(t)-\frac{\partial L}{\partial z}[x,z](t)\theta(t)=\sum_{j=1}^n\left(\frac{\partial L}{\partial x_j}[x,z](t)\xi_j(t)+\frac{\partial L}{\partial {^C_aD^{\alpha_j}_t}x_j}[x,z](t){^C_aD^{\alpha_j}_t}\xi_j(t)\right)$$
whose solution is
$$\theta(t)\lambda(t)-\theta(a)=\int_a^t \sum_{j=1}^n\left(\frac{\partial L}{\partial x_j}[x,z](\tau)\xi_j(\tau)+\frac{\partial L}{\partial {^C_aD^{\alpha_j}_\tau}x_j}[x,z](\tau){^C_aD^{\alpha_j}_\tau}\xi_j(t)\right)\lambda(\tau)d\tau$$
with notation $\lambda(t)=\exp\left(-\int_a^t \frac{\partial L}{\partial z}[x,z](\tau)d\tau\right)$.
 Observe that $\theta(a)=0$,
also by hypothesis $\theta (\tau)=0$ for arbitrary $\tau$. Thus
$$0=\int_a^ \tau\sum_{j=1}^n\left(\frac{\partial L}{\partial x_j}[x,z](t)\xi_j(t)+\frac{\partial L}{\partial {^C_aD^{\alpha_j}_t}x_j}[x,z](t){^C_aD^{\alpha_j}_t}\xi_j(t)\right)\lambda (t)dt$$
On the solutions of the generalized fractional Euler--Lagrange equations \eqref{NC} we have
\begin{equation*}
0=\sum_{j=0}^n \left[\lambda(t)\frac{\partial L}{\partial ^C_aD^{\alpha_j}_t x_j[x,z](t)}
{^C_aD^{\alpha_j}_t}\xi_j(t,x)-
_tD^{\alpha_j}_b\left(\lambda(t)\frac{\partial L}{\partial ^C_aD^{\alpha_j}_tx_j[x,z](t)}\right)\xi_j(t,x)\right].
\end{equation*}
By definition of operator $\mathcal{D}^{\alpha}$ we obtain equation \eqref{eq:Noether:without:time}.
\end{proof}

\begin{ex}
Consider the Lagrangian in one spatial dimension
\begin{equation}\label{ex:l:L}
L(t,x(t),{^C_aD^\alpha_t}x(t),z(t))=f(^C_aD^{\alpha}_tx)-\gamma z,
\end{equation}
where $f$ is a $C^1$ function and $\gamma$ is a positive constant. Obviously, the Lagrangian \eqref{ex:l:L} is invariant under transformation
$$\bar{x}(t)=x(t)+c,$$
where $c$ is a constant. Therefore, Theorem~\ref{Noether:without:time} indicates that
\begin{equation}\label{ex:l:n}
\mathcal{D}^{\alpha}\left[e^{\gamma t}f'(^C_aD^{\alpha}_tx),c\right]=0,
\end{equation}
along any solution of $_tD^{\alpha}_b\left(e^{\gamma t}f'(^C_aD^{\alpha}_tx)\right)=0$. Notice that equation \eqref{ex:l:n} can be written in the form $\frac{d}{dt}\left(_tI^{1-\alpha}_b\left(e^{\gamma t}f'(^C_aD^{\alpha}_tx)\right)\right)=0$, that is, the quantity
$$_tI^{1-\alpha}_b\left(e^{\gamma t}f'(^C_aD^{\alpha}_tx)\right)$$
 following the classical approach, can be called a generalized fractional constant of motion.

\end{ex}

Now we generalize Theorem~\ref{Noether:without:time} to the case of several independent variables. For that consider the one-parameter family of transformations
\begin{equation}\label{tra:fam:m}
\bar{u}_j=h_j(t,x,u,s), \quad j=1,\ldots,m
\end{equation}
depending on a parameter $s$, $s\in (-\varepsilon,\varepsilon)$, where  $h_j$ are of class $C^2$ and such that $h_j(t,x,u,0)=u_j(t,x)$ for all $j\in\{1,\ldots,m\}$ and for all $(t,x,u)\in [a,b]\times \Omega \times \mathbb R^m$.  As before for $|s|\leq \varepsilon$ the linear approximation to transformation \eqref{tra:fam:m} is $\bar{u}_j\approx u_j+s\xi_j(t,x,u)$,
where $\xi_j(t,x,u)=\frac{\partial}{\partial s}h_j(t,x,u,s)|_{s=0}$. The following result generalizes Theorem~\ref{Noether:without:time}, and can be proved in a similar way (cf. \cite{ABM})

\begin{thm}\label{Noether:without:time:m}
Let the functional $z$, defined by differential equation \eqref{GVPS}, is invariant under the family of transformations \eqref{tra:fam:m}. Then
\begin{multline*}
\sum_{j=1}^n\left\{\mathcal{D}^{\alpha_j}\left[\lambda(t)\frac{\partial L}{\partial {^C_aD^{\alpha_j}_t}u_j}[u,z](t,x),\xi_j(t,x,u)\right]\right.\\
\left.+\sum_{i=1}^n\mathcal{D}^{\beta_{j,i}}\left[\lambda(t)\frac{\partial L}{\partial{^C_{a_i}D^{\beta_{i,j}}_{x_i}}u_j}[u,z](t,x),\xi_j(t,x,u)\right]\right\}=0,
\end{multline*}
where $\lambda(t):=\exp\left(-\int_a^t\int_\Omega  \frac{\partial L}{\partial z}[u,z](\tau,x) d^nx d\tau \right)$ and $\mathcal{D}^{\alpha}[f,g]:=f {^C_aD^{\alpha}_t}\cdot g- g\cdot{_tD^{\alpha}_b}f$, holds along the solutions of the generalized fractional Euler--Lagrange equations \eqref{EL:md}.
\end{thm}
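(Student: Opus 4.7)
My plan is to mirror the proof of Theorem~\ref{Noether:without:time} with the obvious bookkeeping changes for the extra spatial variables and for the integral in \eqref{GVPS}. Because the invariance assumption asserts that $\theta(t)\equiv 0$ for every $t$, and because $z$ is defined by an integro-differential equation whose $L$ now carries both temporal and spatial fractional derivatives of $u$, the task reduces to producing a linear ODE for $\theta$, integrating it against the factor $\lambda$, and then applying \eqref{EL:md} to identify the resulting bulk integrand with a sum of $\mathcal{D}$-brackets.

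First I would introduce the infinitesimal generator $\xi=(\xi_1,\dots,\xi_m)$ with $\xi_j(t,x,u)=\partial_s h_j(t,x,u,s)|_{s=0}$, set $\bar u_j = h_j(t,x,u,s)$, and write $\theta(t)=\frac{d}{ds}\bar z[u+s\xi;t]|_{s=0}$. Applying the transformation to \eqref{GVPS}, differentiating in $s$, and evaluating at $s=0$, I get, just as in Theorem~\ref{TNC7},
\begin{equation*}
\dot\theta(t)-B(t)\theta(t)=\int_\Omega \sum_{j=1}^m\!\left[\frac{\partial L}{\partial u_j}[u,z]\xi_j+\frac{\partial L}{\partial {^C_aD^{\alpha_j}_t}u_j}[u,z]\,{^C_aD^{\alpha_j}_t}\xi_j+\sum_{i=1}^n \frac{\partial L}{\partial {^C_{a_i}D^{\beta_{i,j}}_{x_i}}u_j}[u,z]\,{^C_{a_i}D^{\beta_{i,j}}_{x_i}}\xi_j\right]\!d^n x,
\end{equation*}
with $B(t)=\int_\Omega \frac{\partial L}{\partial z}[u,z](t,x)\,d^n x$. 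Multiplying by the integrating factor $\lambda(t)=\exp(-\int_a^t B(\tau)\,d\tau)$ gives $\frac{d}{dt}(\lambda(t)\theta(t))=\lambda(t)A(t)$, where $A(t)$ is the bracket on the right-hand side above.

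Next I would exploit the invariance hypothesis $\theta\equiv 0$. Since also $\theta(a)=0$ (same argument as in Theorem~\ref{TNC}: the initial value $z_a$ is fixed for all varied arguments), differentiating $\lambda(t)\theta(t)\equiv 0$ yields $\lambda(t)A(t)=0$, hence $A(t)=0$ pointwise in $t$. Because $\xi$ is arbitrary only in $s$ (not in its $x$-dependence), I cannot strip the spatial integral with the fundamental lemma; instead, on the solutions of \eqref{EL:md} I substitute
\begin{equation*}
\lambda(t)\frac{\partial L}{\partial u_j}[u,z]=-{_tD^{\alpha_j}_b}\!\left(\lambda\frac{\partial L}{\partial {^C_aD^{\alpha_j}_t}u_j}[u,z]\right)-\sum_{i=1}^n {_{x_i}D^{\beta_{i,j}}_{b_i}}\!\left(\lambda\frac{\partial L}{\partial {^C_{a_i}D^{\beta_{i,j}}_{x_i}}u_j}[u,z]\right)
\end{equation*}
into $\lambda(t)A(t)=0$ and regroup term by term. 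Each $j$-summand contributes $\lambda\cdot(\partial_* L)\cdot{^C D}\xi_j-\xi_j\cdot{_*D_*}(\lambda\cdot\partial_* L)$, which is precisely the definition of $\mathcal{D}^{\alpha_j}[\,\cdot\,,\xi_j]$ or $\mathcal{D}^{\beta_{i,j}}[\,\cdot\,,\xi_j]$. Summing over $i$ and $j$ and noting that the integral over $\Omega$ gives the stated identity inside the integral (valid pointwise since $\xi$ may be taken localized, or because the multi-index EL equations hold pointwise and the $\mathcal{D}$-expression is a pointwise combination of them).

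The main obstacle I anticipate is the bookkeeping: keeping straight which fractional operator acts in $t$ versus which in $x_i$, and making sure the integration-by-parts rearrangement that converts the $A(t)=0$ identity into a sum of $\mathcal{D}$-brackets is carried out consistently for both the temporal and each of the spatial fractional variables. Apart from this, and a brief justification that $\theta$ is well-defined and $C^1$ in $t$ (invoking the same ODE-dependence result cited after \eqref{variation:1}), the argument is a direct transcription of the single-variable proof.
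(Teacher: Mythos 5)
The paper does not actually prove this theorem: it only remarks that the result ``can be proved in a similar way'' to Theorem~\ref{Noether:without:time} and refers to \cite{ABM}. Your proposal is exactly that intended adaptation, and most of it is sound: the linear ODE for $\theta$ with coefficients $A(t)$ and $B(t)$, the integrating factor $\lambda$, the deduction $\lambda(t)A(t)=0$ from the invariance hypothesis $\theta\equiv 0$ together with $\theta(a)=0$, and the substitution of the Euler--Lagrange equations \eqref{EL:md} to rewrite the integrand of $\lambda(t)A(t)$ as $\sum_{j}\bigl\{\mathcal{D}^{\alpha_j}[\cdot,\xi_j]+\sum_i\mathcal{D}^{\beta_{i,j}}[\cdot,\xi_j]\bigr\}$ all parallel the single-variable proof correctly.

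The genuine gap is your final step. What your argument delivers is $\int_\Omega\sum_j\{\cdots\}\,d^nx=0$ for each $t$, whereas the theorem as literally stated (with no $\int_\Omega$) asserts the vanishing of that sum pointwise in $(t,x)$. Neither of your two justifications closes this. You cannot ``take $\xi$ localized'': $\xi_j=\partial_s h_j|_{s=0}$ is determined by the given transformation family \eqref{tra:fam:m}, and invariance is assumed only for that one family, not for a class of test functions, so there is no fundamental-lemma argument available. Nor is the bracket sum ``a pointwise combination of the EL equations'': besides the terms $\xi_j\,{_tD^{\alpha_j}_b}(\cdots)$ and $\xi_j\,{_{x_i}D^{\beta_{i,j}}_{b_i}}(\cdots)$ it contains $\lambda\,\frac{\partial L}{\partial {^C_aD^{\alpha_j}_t}u_j}\,{^C_aD^{\alpha_j}_t}\xi_j$ and its spatial analogues, which the EL equations do not control. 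With the paper's integrated notion of invariance (Definition~\ref{def:in} transplanted to $z[u;t]$), your argument honestly proves only the $\Omega$-integrated identity; the pointwise conclusion requires a stronger, Lagrangian-density-level invariance assumption, as in \cite{ABM}. You should either state the conclusion with the spatial integral or strengthen the invariance hypothesis --- a defect which, to be fair, the paper's own one-line ``proof'' inherits as well.
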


%--------------------------------------------------------------------

\section*{Acknowledgements}

Ricardo Almeida is supported by Portuguese funds through the CIDMA - Center for Research and Development in Mathematics and Applications, and the Portuguese Foundation for Science and Technology ("FCT–Fundação para a Ciência e a Tecnologia"), within project PEst-OE/MAT/UI4106/2014. Agnieszka B. Malinowska is supported by the Bialystok University of Technology grant S/WI/02/2011. We thank the anonymous reviewer for his careful reading of our manuscript and his many insightful comments and suggestions.

%----------------------------------------------------------------

%----------------------------------------------------------------
\medskip
Received ; revised .
\medskip

\end{document}